\newtheorem{theorem}{Theorem}
\newtheorem{definition}{Defiition}
\newcommand{\sS}{\mathscr{S}}
\newcommand{\G}{\mathscr{G}}
\newcommand{\s}{\sigma}
\newcommand{\hd}{\hat{d}}
\begin{document}

\DateofSubmission{October 4, 2019}
\EmailofCorrespondingAuthor{sudev.nk@christuniversity.in}

\title[Certain Topological Indices of Signed Graphs] {On Certain Topological Indices of Signed Graphs}
\author[N. K. Sudev and J.Kok] {Sudev Naduvath$^{1,*}$, Johan Kok$^1$ }
\maketitle

\vspace*{-0.2cm}

\begin{center}
{\footnotesize  
$^1$Department of Mathematics, CHRIST (Deemed to be University), Bangalore 560029, India.\\
}
\end{center}

{\tiny\CopyrightInfo}

{\footnotesize 

\noindent {\bf Abstract.}  The first Zagreb index of a graph $G$ is the sum of squares of the vertex degrees in a graph and the second Zagreb index of $G$ is the sum of products of degrees of adjacent vertices in $G$. The imbalance of an edge in $G$ is the numerical difference of degrees of its end vertices and the irregularity of $G$ is the sum of imbalances of all its edges. In this paper, we extend the concepts of these topological indices for signed graphs and discuss the corresponding results on signed graphs. \vskip 1mm

\noindent {\bf Keywords:} Positive Zagreb indices; negative Zagreb indices; signed imbalance indices; net-imbalance; Zagreb net-indices; Gutman indices.\vskip 1mm

\noindent {\bf 2010 AMS Subject Classification:}  05C15, 05C55, 05D40.

}

\vskip 6mm

\FootNoteInfo 

\section{Introduction}

For general notation and concepts in graph theory, we refer to \cite{BM1,FH1,DBW} and for the terminology in signed graphs, see \cite{TZ1,TZ2}. Unless mentioned otherwise, all graphs considered here are finite, simple, undirected and connected.

Let $G(V, E)$ be a graph with vertex set $V(G)=\{v_1, v_2,\ldots, v_n\}$ and
edge set $E(G)$, where $|E(G)|=m$. The degree of a vertex $v_i$ in $G$ is the number of edges incident on it and is denoted by $d_G(v_i)$. If the context is clear, let us use the notation $d_i$ instead of $d_G(v_i)$.

The \textit{first Zagreb index}, denoted by $M_1(G)$ and the \textit{second Zagreb index}, denoted by $M_2(G)$ is defined in \cite{GT1} as $M_1(G) = \sum\limits_{v_i\in V(G)} d_i^2$ and $M_2(G) = \sum\limits_{v_iv_j\in E(G)}^n d_id_j$. The \textit{imbalance} of an edge $e=uv \in E(G)$ is defined as $imb_G(uv)=|d_G(u)-d_G(v)|$ (see \cite{MOA1}). The notion of \textit{irregularity} of a graph $G$ has also been introduced in \cite{MOA1} as $irr(G)=\sum\limits_{uv\in E(G)}imb(uv)$. Another new measure of irregularity of a simple undirected graph $G$, called the \textit{total irregularity} of $G$, denoted by $irr_t(G)$, is defined in \cite{ABD1} as $irr_t(G)=\frac{1}{2}\sum\limits_{u,v\in V(G)}|d(u)-d(v)|$.

We extend the notions of these topological indices of graphs defined in the previous section to the theory of signed graphs. 


A \textit{signed graph} (see \cite{TZ1,TZ2}), denoted by $S(G,\s)$,  is a graph $G(V,E)$ together with a function $\s:E(G)\to \{+,-\}$ that assigns a sign, either $+$ or $-$, to each ordinary edge in $G$. The function $\s$ is called the {\em signature} or {\em sign function} of $S$, which is defined on all edges except half edges and is required to be positive on free loops. The unsigned graph $G$ is called the \textit{underlying graph} of the signed graph $S$.

An edge $e$ of $S$ is said to be \textit{positive} or \textit{negative} in accordance with its signature $\s(e)$ is positive or negative. The number of positive edges incident on a vertex $v$ in $S$ is the \textit{positive degree} of $v$ and is denoted by $d_S^+(v)$ and the number of negative edges incident on $v$ is the \textit{negative degree} of $v$ and is denoted by $d_S^-(v)$. Clearly, $d_G(v)=d_S^+(v)+d_S^-(v)$.

Analogous to the definition of first Zagreb index of a graph, we can define two types of first Zagreb indices for a given signed  graph $S$ as follows.

\begin{definition}{\rm 
Let $S$ be a signed graph and let $d_i^+$ and $d_i^-$ be the positive and negative degree of a vertex $v_i$ in $S$. Then, the \textit{first positive Zagreb index} of $S$ is denoted by $M_1^+(S)$ is defined as
\begin{gather}\label{Eqn-2.2}
M_1^+(S)=\sum\limits_{v_i\in V(G)}\left(d_i^+\right)^2,
\end{gather}
the \textit{first negative Zagreb index} of $S$ is denoted by $M_1^-(S)$ is
\begin{gather}\label{Eqn-2.3}
M_1^-(S)=\sum\limits_{v_i\in V(G)}\left(d_i^-\right)^2
\end{gather}
and the \textit{first mixed Zagreb index} of a signed graph $S$ is denoted by $M_1^{\ast}$ and is defined as
\begin{gather}\label{Eqn-2.4}
M_1^{\ast}(S)=\sum\limits_{v_i\in V(G)} d_i^+d_i^-
\end{gather}
}\end{definition}

In a similar way, we can also define the second Zagreb indices for a signed graph $S$ as follows.

\begin{definition}{\rm 
Let $S$ be a signed graph and let $d_i^+$ and $d_i^-$ be the positive and negative degree of a vertex $v_i$ in $S$. Then, the \textit{second positive Zagreb index} of $S$ is denoted by $M_2^+(S)$ is defined as
\begin{gather}\label{Eqn-2.5}
M_2^+(S)=\sum\limits_{v_iv_j\in E(G)}d_i^+d_j^+;\ 1\le i\ne j \le n,
\end{gather}
the \textit{second negative Zagreb index} of $S$ is denoted by $M_2^-(S)$ is
\begin{gather}\label{Eqn-2.6}
M_2^-(S)=\sum\limits_{v_iv_j\in E(G)}d_i^-d_j^-;\ 1\le i\ne j \le n
\end{gather}
and the \textit{second mixed Zagreb index} of a signed graph $S$ is denoted by $M_2^{\ast}$ and is defined as
\begin{gather}\label{Eqn-2.7}
M_2^{\ast}(S)=\sum\limits_{v_iv_j\in E(G)} d_i^+d_j^-;\ 1\le i\ne j \le n.
\end{gather}
}\end{definition}

In view of the new notions defined above, the relation between the first Zagreb indices of a signed graph $S$ and the first Zagreb index of its underlying graph $G$ is discussed in the following theorem.

\begin{theorem}
For a signed graph $S$ and its underlying graph $G$,
\begin{enumerate}\itemsep0mm
\item[(i)] $M_1(G)=M_1^+(S)+ M_1^-(S)+2M_1^{\ast}(S)$,
\item[(ii)] $M_2(G)=M_2^+(S)+ M_2^-(S)+M_2^{\ast}(S)$.
\end{enumerate}
\end{theorem}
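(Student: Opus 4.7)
The plan is to prove each part by directly expanding the relation $d_i = d_i^+ + d_i^-$, which holds for every vertex $v_i$ since each edge incident to $v_i$ is either positive or negative (and exactly one of the two). The whole argument is an exercise in expanding a square or a product and then regrouping the sums according to the definitions \eqref{Eqn-2.2}--\eqref{Eqn-2.7}.

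For part (i), I would start from the definition $M_1(G)=\sum_{v_i\in V(G)} d_i^{2}$, substitute $d_i=d_i^+ + d_i^-$, and use the binomial identity
\begin{equation*}
d_i^{2}=(d_i^+ + d_i^-)^{2}=(d_i^+)^{2}+2\,d_i^+ d_i^- +(d_i^-)^{2}.
\end{equation*}
Summing over $v_i\in V(G)$ and splitting the three resulting sums according to \eqref{Eqn-2.2}, \eqref{Eqn-2.3} and \eqref{Eqn-2.4} yields $M_1(G)=M_1^+(S)+M_1^-(S)+2M_1^{\ast}(S)$, as required.

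For part (ii), I would apply the same substitution to $M_2(G)=\sum_{v_iv_j\in E(G)}d_i d_j$ and expand
\begin{equation*}
d_i d_j=(d_i^+ + d_i^-)(d_j^+ + d_j^-)=d_i^+ d_j^+ + d_i^- d_j^- + \bigl(d_i^+ d_j^- + d_i^- d_j^+\bigr).
\end{equation*}
Summing edge-by-edge and matching each group with \eqref{Eqn-2.5}, \eqref{Eqn-2.6} and \eqref{Eqn-2.7} gives the stated identity.

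The only real subtlety, which I would flag explicitly, is the interpretation of $M_2^{\ast}(S)$. Since for an unordered edge $v_iv_j$ the cross-term expansion produces \emph{both} $d_i^+ d_j^-$ and $d_j^+ d_i^-$, the identity in (ii) is consistent with the definition in \eqref{Eqn-2.7} precisely when the sum there is read so that each unordered edge contributes both orientations, i.e.\ $\sum_{v_iv_j\in E(G)}(d_i^+d_j^-+d_j^+d_i^-)$. Once this convention is made explicit, the verification reduces to a routine regrouping with no further obstacles.
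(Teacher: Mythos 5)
Your proposal is correct and follows essentially the same route as the paper: substitute $d_i=d_i^++d_i^-$ into the definitions and regroup the expanded sums according to \eqref{Eqn-2.2}--\eqref{Eqn-2.7}. Your explicit remark about the cross-term convention in part (ii) is a point the paper's own proof passes over silently (it writes the expansion with a single cross-sum $\sum d_i^+d_j^-$), so flagging that $M_2^{\ast}(S)$ must be read as $\sum_{v_iv_j\in E(G)}\left(d_i^+d_j^-+d_j^+d_i^-\right)$ is a genuine improvement in precision rather than a deviation in method.
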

\begin{proof}
Let $d_i$ denotes the degree of a vertex $v_i$ in the underlying graph $G$ of a signed graph $S$. Then, we have

\begin{eqnarray*}
\noindent {\rm (i)}\ \  M_1(G) & = & \sum\limits_{v_i\in V(G)} (d_i)^2\\
& = & \sum\limits_{v_i\in V(G)} (d_i^+ + d_i^-)^2\\
& = & \sum\limits_{v_i\in V(G)} (d_i^+)^2 + \sum\limits_{v_i\in V(G)} (d_i^-)^2 + 2\sum\limits_{v_i\in V(G)} d_i^+d_i^-\\
& = & M_1^+(S)+ M_1^-(S)+2\sum\limits_{v_i\in V(G)} d_i^+d_i^-\\
M_1(G) & = & M_1^+(S)+ M_1^-(S)+2 M_1^{\ast}(S). \\\\
\noindent {\rm(ii)}\ \  M_2(G) & = & \sum\limits_{v_iv_j\in E(G)} d_id_j\\ 
& = & \sum\limits_{v_iv_j\in E(G)} (d_i^+ + d_i^-)(d_j^+ + d_j^-)\\
& = & \sum\limits_{v_iv_j\in E(G)} d_i^+d_j^+ + \sum\limits_{v_iv_j\in E(G)} d_i^-d_j^- + \sum\limits_{v_iv_j\in E(G)} d_i^+d_j^-\\
& = & M_2^+(S)+ M_2^-(S)+M_2^{\ast}(S).
\end{eqnarray*}
\vspace{-1cm}

\end{proof}


Analogous to the definition of imbalance of edges in graphs, let us introduce the following definitions for signed graphs.

\begin{definition}{\rm 
For an edge $e=uv$ in a signed graph $S$, the \textit{positive imbalance} of $e$ can be defined as $imb_S^+(uv)=|d_S^+(u)-d_S^+(v)|$ and the \textit{negative imbalance} of $e$ can be defined as $imb_S^-(uv)=|d_S^-(u)-d_S^-(v)|$.}
\end{definition} 

In a similar way, the two types irregularities of a signed graph can be defined as follows.

\begin{definition}{\rm 
The \textit{positive irregularity} of a signed graph $S$, denoted by $irr^+(S)$, is defined to be $irr^+(S)=\sum\limits_{uv\in E(S)}imb_S^+(uv)$ and the \textit{negative irregularity} of $S$, denoted by $irr^-(S)$, is defined as $irr^-(S)=\sum\limits_{uv\in E(S)}imb_S^-(uv)$.}
\end{definition} 

The \textit{total irregularities} of a signed graph $S$ can also be defined as given below.

\begin{definition}{\rm 
The \textit{total positive irregularity} of a signed graph $S$, denoted by $irr_t^+(S)$, is defined as 
$$irr_t^+(G)=\frac{1}{2}\sum\limits_{u,v\in V(S)}|d_S^+(u)-d_S^+(v)|.$$
and the \textit{total negative irregularity} of a signed graph $S$, denoted by $irr_t^+(S)$, is defined as 
$$irr_t^-(G)=\frac{1}{2}\sum\limits_{u,v\in V(S)}|d_S^-(u)-d_S^-(v)|.$$ 
}\end{definition}

The following theorem discusses the relation between the irregularities and total irregularities of a signed graph $S$ with the corresponding indices of its underlying graph $G$.

\begin{theorem}\label{Thm-2.2}
Let $S$ be a signed graph and $G$ denotes its underlying graph. Then, we have
\begin{enumerate}
\item[(i)] $imb_G(uv) \le imb_S^+(uv)+imb_S^-(uv);\ uv\in E(S)\ (and\ E(G))$,
\item[(ii)] $irr(G) \le irr^+(S)+irr^-(S)$,
\item[(iii)] $irr_t(G) \le irr_t^+(S)+irr_t^-(S)$.
\end{enumerate}
\end{theorem}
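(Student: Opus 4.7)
The proof plan rests on the single algebraic identity
$d_G(v)=d_S^+(v)+d_S^-(v)$, which is stated in the preliminaries, combined with the triangle inequality $|a+b|\le |a|+|b|$. All three parts are essentially one observation applied pointwise and then summed.

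For part (i), I would fix an edge $uv\in E(G)=E(S)$ and write
$$d_G(u)-d_G(v)=\bigl(d_S^+(u)-d_S^+(v)\bigr)+\bigl(d_S^-(u)-d_S^-(v)\bigr).$$
Taking absolute values and applying the triangle inequality gives
$$imb_G(uv)=|d_G(u)-d_G(v)|\le |d_S^+(u)-d_S^+(v)|+|d_S^-(u)-d_S^-(v)|=imb_S^+(uv)+imb_S^-(uv),$$
which is precisely (i).

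For part (ii), since $E(S)=E(G)$, I would simply sum the inequality in (i) over all edges $uv\in E(G)$; the left-hand side aggregates to $irr(G)$ and the right-hand side splits into $irr^+(S)+irr^-(S)$ by linearity of the sum. Part (iii) proceeds identically, but now the triangle-inequality step is applied to every unordered pair $\{u,v\}\subseteq V(S)=V(G)$ rather than only to edges; after multiplying by $\tfrac12$ and summing, the left side becomes $irr_t(G)$ while the right side becomes $irr_t^+(S)+irr_t^-(S)$.

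There is no real obstacle here: the only thing to be a little careful about is to verify that the summation conventions in the definitions of $irr$, $irr_t$ and their signed analogues match up so that termwise inequality transfers to inequality of the sums. No cancellation or sign issue arises because the triangle inequality is used on absolute values, and equality in (i)–(iii) occurs exactly when, on every relevant pair $\{u,v\}$, the quantities $d_S^+(u)-d_S^+(v)$ and $d_S^-(u)-d_S^-(v)$ have the same sign (or one of them is zero), a remark worth including after the main calculation.
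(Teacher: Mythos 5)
Your proposal is correct and follows essentially the same route as the paper: decompose $d_G(v)=d_S^+(v)+d_S^-(v)$, regroup, apply the triangle inequality termwise, and sum over edges (for $irr$) or over vertex pairs with the factor $\tfrac12$ (for $irr_t$). Your added remark on when equality holds is a small bonus not present in the paper, but the argument itself is identical.
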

\begin{proof}
Let $S$ be a signed graph with underlying graph $G$ and let $e=uv$ be any edge of $S$ (and $G$). Then, 
\begin{eqnarray*}
imb_G(uv) & = & |d_G(u)-d_G(v)|\\
& = & |\left(d^+(u)+d^-(u)\right)-\left(d^+(v)+d^-(v)\right)|\\
& = & |\left(d^+(u)-d^+(v)\right)+\left(d^-(u)-d^-(v)\right)|\\
& \le & |\left(d^+(u)-d^+(v)\right)|+|\left(d^-(u)-d^-(v)\right)|\\
\therefore\ imb_G(uv) & \le & imb_S^+(uv)+imb_S^-(uv).
\end{eqnarray*}
Also, 
\begin{eqnarray*}
irr(G) & = & \sum\limits_{uv\in E(G)} imb_G(uv)\\
& \le & \sum\limits_{uv\in E(S)} \left(imb_S^+(uv)+imb_S^-(uv)\right)\\
& = & \sum\limits_{uv\in E(S)} imb_S^+(uv)+ \sum\limits_{uv\in E(S)} imb_S^-(uv)\\
\therefore\ irr(G) & \le & irr^+(S)+irr^-(S).
\end{eqnarray*}
and 
\begin{eqnarray*}
irr_t(G) & = & \frac{1}{2}\sum\limits_{u,v\in V(G)}|d_G(u)-d_G(v)|\\
& = & \frac{1}{2}\left(\sum\limits_{u,v\in V(G)} \left|\left(d_S^+(u)+d_S^-(u)\right)-\left(d_S^+(v)+d_S^-(v)\right)\right|\right)\\
& = & \frac{1}{2}\left(\sum\limits_{u,v\in V(G)}\left|\left(d_S^+(u)-d_S^+(v)\right)+\left(d_S^-(u)+d_S^-(v)\right)\right|\right)\\
& \le & \frac{1}{2}\sum\limits_{u,v\in V(G)}\left|\left(d_S^+(u)-d_S^+(v)\right)\right|+\frac{1}{2}\sum\limits_{u,v\in V(G)}\left|\left(d_S^-(u)-d_S^-(v)\right)\right|\\
\therefore\ irr_t(G) & \le & irr_t^+(S)+irr_t^-(S).
\end{eqnarray*}
\vspace{-1cm}

\end{proof}


The \textit{net-degree} of a signed graph $S$, denoted by $d_S^{\pm}$, is defined in \cite{HPG1} as $d_S^{\pm}(v)=d_S^+(v)-d_S^-(v)$. The signed graph $S$ is said to be \textit{net-regular} if every vertex of $S$ has the same net-degree. Different from the notation used in \cite{HPG1}, we use notation $\hat{d}_S(v)$ represent the net-degree of a vertex in a signed graph $S$. 

Invoking the notion of the net-degree of vertices in a signed graph $S$, we introduce the following notions on $S$.

\begin{definition}{\rm 
Let $S$ be a signed graph and let $\hd_i$ denotes the net-degree of a vertex in $S$. The \textit{first Zagreb net-index} of the signed graph $S$ is denoted by $M_1(S)$ and is defined as 
\begin{gather}
M_1(S)= \sum\limits_{v_i\in V(S)} \hd_i^2
\end{gather}
and the \textit{second Zagreb net-index} of $S$ is denoted by $M_2(S)$ is defined as 
\begin{gather}
M_2(S)= \sum\limits_{v_iv_j\in E(S)} \hd_i\hd_j
\end{gather}
}\end{definition}

\vspace{0.25cm}

\noindent In view of the above notions, we have the following theorems.

\begin{theorem}\label{Thm-3.1}
Let $S$ be a signed graph and $G$ be its underlying graph. Then, 
\begin{enumerate}\itemsep0mm
\item[(i)] $M_1(G)=M_1(S)+4M_1^{\ast}(S)$,
\item[(ii)] $M_2(G)=M_2(S)+2M_2^{\ast}(S)$.
\end{enumerate}
\end{theorem}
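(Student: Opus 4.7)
The plan is to reduce both identities to algebraic expansions of $d_i = d_i^+ + d_i^-$ and $\hat d_i = d_i^+ - d_i^-$, and then sum over the appropriate index set (vertices for (i), edges for (ii)). The key algebraic observation is that the sum and difference of two non-negative numbers satisfy the parallelogram-type identity $(a+b)^2 = (a-b)^2 + 4ab$ and the bilinear analogue $(a+b)(c+d) = (a-b)(c-d) + 2(ad+bc)$, which match the two claimed formulas almost verbatim.

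For part (i), I would start from $M_1(G) = \sum_{v_i \in V(G)} d_i^2$, substitute $d_i = d_i^+ + d_i^-$, expand the square, and then write
\[
(d_i^+ + d_i^-)^2 = (d_i^+ - d_i^-)^2 + 4\,d_i^+ d_i^- = \hat d_i^{\,2} + 4\,d_i^+ d_i^-.
\]
Summing over $v_i \in V(S)$ and recognising the two resulting sums as $M_1(S)$ and $4 M_1^{\ast}(S)$ respectively yields the identity. This step is essentially a one-line computation once the identity is noticed.

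For part (ii), I would do the analogous expansion on each edge. For $v_iv_j \in E(S)$,
\[
d_i d_j = (d_i^+ + d_i^-)(d_j^+ + d_j^-), \qquad \hat d_i \hat d_j = (d_i^+ - d_i^-)(d_j^+ - d_j^-),
\]
and subtracting gives $d_i d_j - \hat d_i \hat d_j = 2(d_i^+ d_j^- + d_i^- d_j^+)$. Summing over $v_iv_j \in E(S)$ then produces $M_2(G) - M_2(S)$ on the left, while the right-hand side is $2M_2^{\ast}(S)$ under the paper's convention (used already in the proof of Theorem~2.1(ii)) that the mixed edge sum $\sum_{v_iv_j \in E(G)} d_i^+ d_j^-$ includes both cross-products $d_i^+ d_j^-$ and $d_i^- d_j^+$ at each unordered edge.

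The main obstacle is not the algebra but the notational subtlety in (ii): the definition of $M_2^{\ast}(S)$ as written is asymmetric in $i,j$ on an unordered edge, so one must first settle on the convention used implicitly in Theorem~2.1(ii) (namely, that each edge contributes both mixed terms). Once that convention is fixed, the factor of $2$ on the right-hand side of (ii) matches the factor that arises from the cross-terms in the expansion, and the proof closes immediately. I would briefly flag this convention at the start of (ii) to avoid ambiguity.
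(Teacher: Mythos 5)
Your proposal is correct, and it reaches the identities by a slightly different route than the paper. The paper's proof expands $M_1(S)=\sum(\hd_i)^2=\sum(d_i^+-d_i^-)^2$ into $M_1^+(S)+M_1^-(S)-2M_1^{\ast}(S)$ and then invokes Theorem~1(i) to replace $M_1^+(S)+M_1^-(S)$ by $M_1(G)-2M_1^{\ast}(S)$ (and argues analogously for $M_2$ via Theorem~1(ii)); your version bypasses the intermediate quantities $M_1^{\pm}(S)$, $M_2^{\pm}(S)$ entirely by subtracting the two expansions pointwise through the identities $(a+b)^2-(a-b)^2=4ab$ and $(a+b)(c+d)-(a-b)(c-d)=2(ad+bc)$ and then summing. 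What the paper's route buys is reuse of an already-proved result; what yours buys is a self-contained, term-by-term argument that makes the factors $4$ and $2$ visible immediately. Your explicit handling of the convention for $M_2^{\ast}(S)$ is also a genuine improvement: the paper's own proof of Theorem~1(ii) expands $(d_i^+ + d_i^-)(d_j^+ + d_j^-)$ into only three sums, silently absorbing the $d_i^- d_j^+$ cross-term into the mixed sum, and your proof of (ii) depends on exactly the same convention, so flagging it is the right call.
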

\begin{proof}
Let $d_i,d_i^+, d_i^-$ respectively represent the degree, positive degree and negative degree of a vertex $v_i$ in $S$. Then, we have  $\hd_i=d_i^+-d_i^-$. Then, we have
\begin{eqnarray*}
M_1(S) & = & \sum\limits_{v_i\in V(S)}\hd_i^2\\
& = & \sum\limits_{v_i\in V(S)}(d_i^+-d_i^-)^2\\
& = & \sum\limits_{v_i\in V(S)}(d_i^+)^2+\sum\limits_{v_i\in V(S)}(d_i^-)^2-2\sum\limits_{v_i\in V(S)}d_i^+d_i^-\\
& = & M_1^+(S)+M_1^-(S)-2M_1^{\ast}(S)\\
& = & (M_1(G)-2M_1^{\ast}(S))-2M_1^{\ast}(S)\\
& = & M_1(G)-4M_1^{\ast}(S).
\end{eqnarray*}
Similarly, 
\begin{eqnarray*}
M_2(S) & = & \sum\limits_{v_iv_j\in E(S)}\hd_i\hd_j\\
& = & \sum\limits_{v_iv_j\in E(S)}(d_i^+-d_i^-)(d_j^+-d_j^-)\\
& = & \sum\limits_{v_iv_j\in E(S)}d_i^+d_j^+ +\sum\limits_{v_iv_j\in E(S)} d_i^-d_j^- -\sum\limits_{v_iv_j\in E(S)}d_i^+d_j^-;\ i\ne j\\
& = & M_2^+(S)+M_2^-(S)-M_2^{\ast}(S)\\
& = & (M_2(G)-M_2^{\ast}(S))-M_2^{\ast}(S)\\
& = & M_2(G)-2M_2^{\ast}(S).
\end{eqnarray*}
\end{proof}

Analogous to the definition of imbalance and irregularities of signed graphs mentioned in the previous section, we introduce the following notions.

\begin{definition}{\rm 
For an edge $e=uv$ in a signed graph $S$, the \textit{net-imbalance} or simply the \textit{imbalance} of $e$, denoted by $imb_S(uv)$, is defined as $imb_S(uv)=|\hd_S(u)-\hd_S(v)|$.}
\end{definition}

\begin{definition}{\rm 
The \textit{irregularity} of a signed graph $S$, denoted by $irr(S)$, is defined to be 
\begin{gather}
irr(S)=\sum\limits_{uv\in E(S)}imb_S(uv)=\sum\limits_{uv\in E(S)}|\hd_S(u)-\hd_S(v)|.
\end{gather}
}\end{definition} 

\begin{definition}{\rm 
The \textit{total irregularity} of a signed graph $S$, denoted by $irr_t(S)$, is defined as 
\begin{gather}
irr_t(G)=\frac{1}{2}\sum\limits_{u,v\in V(S)}|\hd_S(u)-\hd_S(v)|
\end{gather}
}\end{definition}

\noindent Note that if a signed graph $S$is net-regular, then $\hd_i=\hd_j;\ \forall\,i\ne j$ and hence we have $irr(S)=0$ and $irr_t(S)=0$.

\begin{theorem}\label{Thm-3.2}
For any signed graph $S$, we have
\begin{enumerate}\itemsep0mm
\item[(i)] $imb_S(v_iv_j)\ge imb_S^+(v_iv_j)-imb_S^-(v_iv_j)$,
\item[(ii)] $irr(S)\ge irr^+(S)-irr^-(S)$,
\item[(iii)] $irr_t(S)\ge irr_t^+(S)-irr_t^-(S)$.
\end{enumerate}
\end{theorem}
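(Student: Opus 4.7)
The plan is to reduce all three inequalities to a single pointwise application of the reverse triangle inequality $|A-B|\ge |A|-|B|$, valid for all real $A,B$, and then sum appropriately.

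For part (i), I would set $A=d_S^+(u)-d_S^+(v)$ and $B=d_S^-(u)-d_S^-(v)$. Then, since $\hat d_S(w)=d_S^+(w)-d_S^-(w)$ for every vertex $w$, a direct rearrangement gives
\begin{equation*}
\hat d_S(u)-\hat d_S(v)=\bigl(d_S^+(u)-d_S^+(v)\bigr)-\bigl(d_S^-(u)-d_S^-(v)\bigr)=A-B.
\end{equation*}
Taking absolute values and invoking the reverse triangle inequality produces
\begin{equation*}
imb_S(v_iv_j)=|A-B|\ge |A|-|B|=imb_S^+(v_iv_j)-imb_S^-(v_iv_j),
\end{equation*}
which is exactly (i). This is the only nontrivial computation in the proof; everything else follows by summation, so the only subtle point is to apply the triangle inequality in the correct direction (the ordinary triangle inequality would give the wrong sense of the inequality).

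For part (ii), I would sum the pointwise inequality in (i) over all $uv\in E(S)$. By linearity the right-hand side splits as $\sum_{uv\in E(S)}imb_S^+(uv)-\sum_{uv\in E(S)}imb_S^-(uv)=irr^+(S)-irr^-(S)$, while the left-hand side is $irr(S)$ by definition.

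For part (iii), the same reverse triangle inequality is applied to an \emph{arbitrary pair} of vertices $u,v\in V(S)$, not only to adjacent ones: the identity $\hat d_S(u)-\hat d_S(v)=(d_S^+(u)-d_S^+(v))-(d_S^-(u)-d_S^-(v))$ is purely a statement about degrees and does not require $uv\in E(S)$. Summing over all unordered pairs and dividing by $2$ yields $irr_t(S)\ge irr_t^+(S)-irr_t^-(S)$. I anticipate no real obstacle; the only care needed is to note that the reverse triangle inequality can be vacuous or even negative on individual terms (when $imb_S^-$ exceeds $imb_S^+$), but this does not affect the final summed inequality.
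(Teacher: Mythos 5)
Your proposal is correct and follows essentially the same route as the paper: rewrite $\hat{d}_S(u)-\hat{d}_S(v)$ as $\bigl(d_S^+(u)-d_S^+(v)\bigr)-\bigl(d_S^-(u)-d_S^-(v)\bigr)$, apply the reverse triangle inequality pointwise, and sum over edges for (ii) and over all vertex pairs for (iii). No gaps; your remark about applying the triangle inequality in the reverse direction is precisely the one step the paper's proof also hinges on.
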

\begin{proof}
Let $e=v_iv_j$ be an arbitrary edge in $G$. Then
\begin{eqnarray*}
imb_S(v_iv_j) & = & |\hd_i-\hd_j|\\
& = & |(d_i^+-d_i^-)-(d_j^+-d_j^-)\\
& \ge & |d_i^+-d_j^+|-|d_i^--d_j^-|\\
& = & imb_S^+(v_iv_j)-imb_S^-(v_iv_j)\\
i.e.,\ imb_S(v_iv_j) & \ge & imb_S^+(S)-imb_S^-(S).
\end{eqnarray*}
Also, 
\begin{eqnarray*}
irr(S) & = & \sum\limits_{v_iv_j\in E(S)}|\hd_i-\hd_j|\\
& = & |\sum\limits_{v_iv_j\in E(S)}(d_i^+-d_i^-)-(d_j^+-d_j^-)\\
& \ge & \sum\limits_{v_iv_j\in E(S)}|d_i^+-d_j^+|-\sum\limits_{v_iv_j\in E(S)}|d_i^--d_j^-|\\
& = & \sum\limits_{v_iv_j\in E(S)} imb_S^+(v_iv_j)-\sum\limits_{v_iv_j\in E(S)}imb_S^-(v_iv_j)\\
i.e.,\ imb_S(v_iv_j) & \ge & imb_S^+(S)-imb_S^-(S).
\end{eqnarray*}
and
\begin{eqnarray*}
irr_t(S) & = & \frac{1}{2}\sum\limits_{v_i,v_j\in V(S)}|\hd_i-\hd_j|\\
& = & \frac{1}{2}\sum\limits_{v_i,v_j\in V(S)}|(d_i^+-d_i^-)-(d_j^+-d_j^-)|\\
& = & \frac{1}{2}\sum\limits_{v_i,v_j\in V(S)}|(d_i^+-d_j^+)-(d_i^- -d_j^-)|\\
& \ge & \frac{1}{2}\sum\limits_{v_i,v_j\in V(S)}|d_i^+-d_j^+|-\frac{1}{2}\sum\limits_{v_i,v_j\in V(S)}|d_i^--d_j^-|\\
& = & irr_t^+(S)-irr_t^-(S)\\
i.e.,\ irr_t(S) & \ge & irr_t^+(S)-irr_t^-(S).
\end{eqnarray*}
\end{proof}


The \textit{Schultz index} of a graph $G$ is defined as $\sS(G)=\sum\limits_{u,v\in V}\left(d_G(u)+d_G(v)\right)d_G(u,v)$ (see \cite{HPS}). Analogous to this terminology, we introduce the following notions for signed graphs.

\begin{definition}{\rm 
The \textit{positive Schultz index} of a signed graph $S$, denoted by $\sS^+(S)$, is defined to be $$\sS^+(S)=\sum\limits_{u,v\in V(S)}\left(d^+_S(u)+d^+_S(v)\right)d_S(u,v)$$ and the \textit{negative Schultz index} of the signed graph $S$, denoted by $\sS^-(S)$, is defined to be $$\sS^-(S)=\sum\limits_{u,v\in V(S)}\left(d^-_S(u)+d^-_S(v)\right)d_S(u,v)$$.}
\end{definition}

Here, note that the distance between two vertices in a signed graph $S$ is the same as the distance between those two vertices in the underlying graph $G$ of $S$. In view of the above notions, we have the following theorem.

\begin{theorem}\label{Thm-4.1}
If $S$ be a signed graph and $G$ be its underlying graph, then $\sS(G)=\sS^+(S)+\sS^-(S)$.
\end{theorem}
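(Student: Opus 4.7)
My plan is to prove this identity by straightforward algebraic manipulation, exploiting the fact that the distance function in $S$ coincides with the distance function in $G$ and that every vertex degree in $G$ decomposes as the sum of its positive and negative degrees in $S$.

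First, I would recall the two structural observations that the paper has already pointed out: for every vertex $v$, $d_G(v) = d_S^+(v) + d_S^-(v)$, and $d_S(u,v) = d_G(u,v)$ for every pair $u,v$. Starting from the definition $\sS(G)=\sum_{u,v\in V(G)}(d_G(u)+d_G(v))\,d_G(u,v)$, I would substitute the degree decomposition into each of $d_G(u)$ and $d_G(v)$ and replace $d_G(u,v)$ by $d_S(u,v)$.

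Next, I would regroup the bracketed factor by collecting positive-degree terms together and negative-degree terms together, rewriting
\[
(d_S^+(u)+d_S^-(u))+(d_S^+(v)+d_S^-(v)) = (d_S^+(u)+d_S^+(v))+(d_S^-(u)+d_S^-(v)).
\]
Splitting the sum along this regrouping then yields exactly the defining expressions for $\sS^+(S)$ and $\sS^-(S)$, which gives the desired identity $\sS(G)=\sS^+(S)+\sS^-(S)$.

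There is no real obstacle here — this is a one-line linearity argument, entirely parallel in spirit to the proof of Theorem~1(i) earlier in the paper but simpler, since we are decomposing a sum rather than a product and therefore no cross terms appear. The only point that warrants an explicit remark is the equality $d_S(u,v)=d_G(u,v)$, which justifies using the same distance factor in all three Schultz-type sums; this is already noted in the paragraph preceding the theorem and so needs only a brief citation in the proof.
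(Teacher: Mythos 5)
Your proposal is correct and follows exactly the same route as the paper's proof: substitute $d_G(u)=d_S^+(u)+d_S^-(u)$ into the Schultz sum, regroup the bracketed factor into positive-degree and negative-degree parts, split the sum by linearity, and use $d_S(u,v)=d_G(u,v)$ to identify the two pieces with $\sS^+(S)$ and $\sS^-(S)$. No differences worth noting.
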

\begin{proof}
Let $G$ be the underlying graph of a signed graph $S$. Then, for any two vertices $u,v\in V(S)$, $d_S(u,v)=d_G(u,v)$. Then, we have
\begin{eqnarray*}
\sS(G) & = & \sum\limits_{u,v\in V(S)}\left[d_G(u)+d_G(v)\right]d_G(u,v)\\
& = & \sum\limits_{u,v\in V(S)}\left[\left(d^+_S(u)+d^-_S(u)\right)+\left(d^+_S(v)+d^-_S(v)\right)\right]d_S(u,v)\\
& = & \sum\limits_{u,v\in V(S)}\left[\left(d^+_S(u)+d^+_S(v)\right)+\left(d^-_S(u)+d^-_S(v)\right)\right]d_S(u,v)\\
& = & \sum\limits_{u,v\in V(S)}\left[d^+_S(u)+d^+_S(v)\right]d_S(u,v)+ \sum\limits_{u,v\in V(S)}\left[d^-_S(u)+d^-_S(v)\right]d_S(u,v)\\
& = & \sS^+(S)+\sS^-(S).
\end{eqnarray*}
\end{proof}

Using the concepts of net-degree of vertices in a signed graph, we introduce the following notion.

\begin{definition}{\rm 
The \textit{Schultz index} of a signed graph $S$, denoted by $\sS(S)$, is defined as $$\sS(S)=\sum\limits_{u,v\in V(S)}\left(\hd_S(u)+\hd_S(v)\right)d_S(u,v)$$, where $\hd(v)$ is the net-degree of a vertex $v\in V(S)$.
}\end{definition}

Invoking the above definition, we have the following theorem on the Schultz index of signed graphs.

\begin{theorem}\label{Thm-4.2}
For a signed graph $S$, then $\sS(S)=\sS^+(S)-\sS^-(S)$.
\end{theorem}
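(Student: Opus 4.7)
The plan is to prove the identity by direct substitution of the net-degree definition $\hd_S(v) = d_S^+(v) - d_S^-(v)$ into the defining sum for $\sS(S)$, and then splitting the resulting sum into a positive and a negative Schultz component. This mirrors exactly the strategy used in Theorem~\ref{Thm-4.1} and in Theorem~\ref{Thm-3.1}, where analogous relationships between net-quantities and their positive/negative counterparts are obtained by expanding $\hd_i$ and regrouping.

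First I would start from the definition
\[
\sS(S) = \sum_{u,v \in V(S)} \bigl(\hd_S(u) + \hd_S(v)\bigr)\, d_S(u,v),
\]
substitute $\hd_S(u) = d_S^+(u) - d_S^-(u)$ and $\hd_S(v) = d_S^+(v) - d_S^-(v)$, and rearrange the bracketed factor into the form
\[
\bigl(d_S^+(u) + d_S^+(v)\bigr) - \bigl(d_S^-(u) + d_S^-(v)\bigr).
\]
Next, by the linearity of summation (the distance factor $d_S(u,v)$ multiplies through), split the sum into two pieces; the first piece is precisely the definition of $\sS^+(S)$ and the second is the definition of $\sS^-(S)$, yielding the claim.

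The argument is essentially a one-line algebraic manipulation, so there is no substantial obstacle; the only point requiring a brief comment is that the distance $d_S(u,v)$ is the same for all three sums (it depends only on the underlying graph, as noted immediately before Theorem~\ref{Thm-4.1}), so the factorization and splitting are legitimate. No additional hypotheses on $S$ (such as connectedness beyond what is assumed throughout the paper) are needed.
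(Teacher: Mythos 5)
Your proposal matches the paper's proof exactly: the paper substitutes $\hd_S(u)=d^+_S(u)-d^-_S(u)$ into the definition of $\sS(S)$, regroups the bracket as $\bigl(d^+_S(u)+d^+_S(v)\bigr)-\bigl(d^-_S(u)+d^-_S(v)\bigr)$, and splits the sum into $\sS^+(S)-\sS^-(S)$. The argument is correct and takes essentially the same route.
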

\begin{proof}
Let $G$ be the underlying graph of a signed graph $S$. Then, as mentioned in the previous theorem, for any two vertices $u,v\in V(S)$, $d_S(u,v)=d_G(u,v)$. Then, we have
\begin{eqnarray*}
\sS(S) & = & \sum\limits_{u,v\in V(S)}\left[\hd_S(u)+\hd_S(v)\right]d_S(u,v)\\
& = & \sum\limits_{u,v\in V(S)}\left[\left(d^+_S(u)-d^-_S(u)\right)+\left(d^+_S(v)-d^-_S(v)\right)\right]d_S(u,v)\\
& = & \sum\limits_{u,v\in V(S)}\left[\left(d^+_S(u)+d^+_S(v)\right)-\left(d^-_S(u)+d^-_S(v)\right)\right]d_S(u,v)\\
& = & \sum\limits_{u,v\in V(S)}\left[d^+_S(u)+d^+_S(v)\right]d_S(u,v)- \sum\limits_{u,v\in V(S)}\left[d^-_S(u)+d^-_S(v)\right]d_S(u,v)\\
& = & \sS^+(S)-\sS^-(S).
\end{eqnarray*}
\end{proof}

If $G$ is the underlying graph of a signed graph $S$, then we have $\sS(G)\ge \sS(S)$. Moreover, we have $\sS(G)+\sS(S)=2\sS^+(S)$ and $\sS(G)-\sS(S)=2\sS^-(S)$. 


The \textit{Gutman index} of a graph $G$ is another interesting topological index, denoted by $\G(G)$, which is defined as $\G(G)=\sum\limits_{u,v\in V}d_G(u)d_G(v)d_G(u,v)$ (see \cite{KG1}). Analogous to this terminology, we introduce the following notions for signed graphs.

\begin{definition}{\rm 
The \textit{positive Gutman index} of a signed graph $S$, denoted by $\G^+(S)$, is defined to be $$\G^+(S)=\sum\limits_{u,v\in V(S)}d^+_S(u)d^+_S(v)d_S(u,v).$$
The \textit{negative Gutman index} of the signed graph $S$, denoted by $\G^-(S)$, is defined as $$\G^-(S)=\sum\limits_{u,v\in V(S)}d^-_S(u)d^-_S(v)d_S(u,v).$$
The \textit{mixed Gutman index} of the signed graph $S$, denoted by $\G^*(S)$, is defined as $$\G^-(S)=\sum\limits_{u,v\in V(S)}d^+_S(u)d^-_S(v)d_S(u,v).$$
}\end{definition}

The following theorem discusses the relation between these Gutman indices of signed graphs and the Gutman index of its underlying graph.

\begin{theorem}\label{Thm-5.1}
If $G$ is the underlying graph of a signed graph $S$, then $\G(G)=\G^+(S)+\G^-(S)+\G^*(S)$.
\end{theorem}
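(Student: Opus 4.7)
The plan is to mirror the expansion-based proofs already used in Theorem 1(ii) and Theorem \ref{Thm-4.1}. I would begin from the definition
\[
\G(G)=\sum_{u,v\in V(G)} d_G(u)\,d_G(v)\,d_G(u,v),
\]
and immediately record the two facts that drive the proof: first, $d_G(u)=d_S^+(u)+d_S^-(u)$ for every vertex, and second, distances in $S$ coincide with distances in $G$, so $d_G(u,v)=d_S(u,v)$ (the remark preceding Theorem \ref{Thm-4.1}).

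Next, I would substitute the decomposition of $d_G$ into the product $d_G(u)d_G(v)$ and expand:
\[
\bigl(d_S^+(u)+d_S^-(u)\bigr)\bigl(d_S^+(v)+d_S^-(v)\bigr)
= d_S^+(u)d_S^+(v) + d_S^-(u)d_S^-(v) + d_S^+(u)d_S^-(v) + d_S^-(u)d_S^+(v).
\]
Multiplying through by $d_S(u,v)$ and splitting the sum over $u,v\in V(S)$ into four pieces gives exactly $\G^+(S)$, $\G^-(S)$, and two cross sums. The two cross sums are identified with the mixed Gutman index $\G^*(S)$ by the same consolidation used in Theorem 1(ii): the relabeling $u\leftrightarrow v$, combined with the symmetry $d_S(u,v)=d_S(v,u)$, turns $\sum_{u,v} d_S^-(u)d_S^+(v)d_S(u,v)$ into $\sum_{u,v} d_S^+(u)d_S^-(v)d_S(u,v)$, so both cross sums collapse into $\G^*(S)$ as defined.

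The only mild obstacle is bookkeeping around this collapse of the two cross terms into a single $\G^*(S)$, since a naive expansion produces four summands while the theorem lists only three. I would resolve this exactly as the author did for $M_2^*(S)$ in Theorem 1(ii), namely by appealing to the symmetry of the summation range $\{u,v\in V(S)\}$ and of $d_S(u,v)$, so that $d_S^+(u)d_S^-(v)$ and $d_S^-(u)d_S^+(v)$ contribute identically; the two cross sums are therefore already both captured by the single expression defining $\G^*(S)$. Stringing the four identified sums back together then gives $\G(G)=\G^+(S)+\G^-(S)+\G^*(S)$, completing the proof.
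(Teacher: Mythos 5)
Your proof follows essentially the same route as the paper's: substitute $d_G=d_S^++d_S^-$ into the product $d_G(u)d_G(v)$, expand into four sums, and identify them with $\G^+(S)$, $\G^-(S)$ and the mixed index. You are in fact more explicit than the paper about the only delicate step---merging the two cross sums $\sum d_S^+(u)d_S^-(v)d_S(u,v)$ and $\sum d_S^-(u)d_S^+(v)d_S(u,v)$ into the single term $\G^*(S)$, which the paper does silently and which (exactly as in Theorem~1(ii)) rests on reading the sum defining $\G^*(S)$ as running over ordered pairs while the outer sum runs over unordered ones; otherwise the mixed term would acquire a factor of $2$.
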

\begin{proof}
\begin{eqnarray*}
\G(G) & = & \sum\limits_{u,v\in V(S)}d_G(u)d_G(v)d_G(u,v)\\
& = & \sum\limits_{u,v\in V(S)}\left[\left(d^+_S(u)+d^-_S(u)\right)\left(d^+_S(v)+ d^-_S(v)\right)\right]d_S(u,v)\\
& = & \sum\limits_{u,v\in V(S)}\left[d^+_S(u)d^+_S(v)+ d^+_S(u)d^-_S(v)+ d^-_S(u)d^+_S(v)+ d^-_S(u)d^-_S(v)\right]d_S(u,v)\\
& = & \sum\limits_{u,v\in V(S)}d^+_S(u)d^+_S(v)d_S(u,v)+ \sum\limits_{u,v\in V(S), u\ne v}d^+_S(u)d^-_S(v)d_S(u,v)+\\ &  &  \sum\limits_{u,v\in V(S)}d^-_S(u)d^-_S(v)d_S(u,v)\\
& = & \G^+(S)+\G^*(S)+\G^-(S).
\end{eqnarray*}
\end{proof}

Using the concepts of net-degree of vertices in a signed graph, we now introduce the following notion.

\begin{definition}{\rm 
The \textit{Gutman index} of a signed graph $S$, denoted by $\G(S)$, is defined as $$\G(S)=\sum\limits_{u,v\in V(S)}\hd_S(u)\hd_S(v)d_S(u,v),$$ where $\hd(v)$ is the net-degree of a vertex $v\in V(S)$.
}\end{definition}

Invoking the above definition, we have the following theorem on the Gutman index of signed graphs.

\begin{theorem}\label{Thm-5.2}
For a signed graph $S$, then $\G(S)=\G^+(S)+\G^-(S)-\sS^*(S)$.
\end{theorem}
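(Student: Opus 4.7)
The plan is to mirror the structure of the proof of Theorem~\ref{Thm-5.1}, exploiting the identity $\hd_S(v) = d^+_S(v) - d^-_S(v)$. I would start from the definition of $\G(S)$, substitute this expression for the net-degree at both endpoints, and then expand the product $\hd_S(u)\hd_S(v)$ under the summation. Since $d_S(u,v)$ does not involve the signature at all, it can stay as a common factor while the algebraic expansion is carried out.

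After expanding, the summand breaks up into four terms:
\begin{gather*}
\hd_S(u)\hd_S(v)\,d_S(u,v) = \bigl[d^+_S(u)d^+_S(v) - d^+_S(u)d^-_S(v) - d^-_S(u)d^+_S(v) + d^-_S(u)d^-_S(v)\bigr] d_S(u,v).
\end{gather*}
Applying linearity of the sum over $u,v \in V(S)$ then splits $\G(S)$ into four separate sums. The first and last are immediately $\G^+(S)$ and $\G^-(S)$ by their definitions. For the two cross terms, I would follow exactly the same convention used in Theorem~\ref{Thm-5.1}: combining $\sum d^+_S(u)d^-_S(v)d_S(u,v)$ with $\sum d^-_S(u)d^+_S(v)d_S(u,v)$ (with the roles of $u,v$ swapped, they are of the same form as the defining sum for $\G^*(S)$) yields $\G^*(S)$, now carrying an overall negative sign because both cross terms appeared with a minus in the expansion.

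Collecting the four pieces gives $\G(S) = \G^+(S) + \G^-(S) - \G^*(S)$, which matches the statement (the $\sS^*$ in the displayed claim is evidently a typographical slip for $\G^*$). The only subtle point, and hence the main thing to be careful about, is the treatment of the two cross terms: one must use the same summation convention adopted in the proof of Theorem~\ref{Thm-5.1} so that the two mixed sums together amount to $\G^*(S)$ rather than $2\G^*(S)$; this keeps the new identity consistent with the already-established expansion $\G(G) = \G^+(S) + \G^-(S) + \G^*(S)$ of Theorem~\ref{Thm-5.1}, and in fact adding the two relations immediately gives the clean companion identity $\G(G) + \G(S) = 2\bigl(\G^+(S) + \G^-(S)\bigr)$, a useful consistency check.
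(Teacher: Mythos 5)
Your proposal is correct and follows essentially the same route as the paper's own proof: expand $\hd_S(u)\hd_S(v)=(d^+_S(u)-d^-_S(u))(d^+_S(v)-d^-_S(v))$, split the sum into four pieces, identify the outer two as $\G^+(S)$ and $\G^-(S)$, and absorb the two cross terms into $-\G^*(S)$ under the same summation convention used in Theorem~\ref{Thm-5.1} (the paper is equally informal about whether the pair sum is ordered or unordered). Your observations that $\sS^*$ in the statement is a typo for $\G^*$ and that adding the two theorems gives $\G(G)+\G(S)=2(\G^+(S)+\G^-(S))$ are both accurate.
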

\begin{proof}
\begin{eqnarray*}
\G(S) & = & \sum\limits_{u,v\in V(S)}\hd_S(u)\hd_S(v)d_S(u,v)\\
& = & \sum\limits_{u,v\in V(S)}\left[\left(d^+_S(u)-d^-_S(u)\right)\left(d^+_S(v)-d^-_S(v)\right)\right]d_S(u,v)\\
& = & \sum\limits_{u,v\in V(S)}\left[d^+_S(u)d^+_S(v)-d^+_S(u)d^-_S(v)-d^-_S(u)d^+_S(v)+d^-_S(u)d^-_S(v)\right]d_S(u,v)\\
& = & \sum\limits_{u,v\in V(S)}d^+_S(u)d^+_S(v)d_S(u,v)- \sum\limits_{u,v\in V(S),u\ne v}d^+_S(u)d^-_S(v)d_S(u,v)+\\ & & \sum\limits_{u,v\in V(S)}d^-_S(u)d^-_S(v)d_S(u,v)\\
& = & \G^+(S)-\G^*(S)+\G^-(S).
\end{eqnarray*}
\end{proof}

Several problems in this area are still open.  Determining various other topological indices for are yet to be determined. Investigations on various theoretical and practical applications of these indices of graphs and signed graphs are also promising. 

\ConflictofInterests


\begin{thebibliography}{99}

\bibitem{ABD1} H. Abdo, S. Brandt,  D. Dimitrov, The total irregularity of a graph, \textit{Discrete Math. Theor. Computer Sci.}, 16(1)(2014), 201-206.

\bibitem{MOA1} M. O. Alberton, The irregularity of a graph, \textit{Ars Combin.},46(1997), 219-225.

\bibitem{ADH1} A. R. Ashrafi, T. Do\v{s}li\'{c}, A. Hamzeha, The Zagreb coindices of graph operations, \textit{Discrete Appl. Math.}, 158(2010), 1571-1578.

\bibitem{BM1} J. A. Bondy, U. S. R. Murty, \textit{Graph theory with applications}, Macmillan Press, London, (1976).

\bibitem{DC1} D. de Caen, An upper bound on the sum of squares of degrees in a graph, \textit{Discrete Math.}, 185(1998), 245-248, DOI:10.1016/S0012-365X(97)00213-6.

\bibitem{DXN} K. C. Das, K. Xu, J. Nam, Zagreb indices of graphs, \textit{Front. Math. China.}, 10(3)(2015), 567-582, DOI: 10.1007/s11464-015-0431-9.

\bibitem{KCD1} K. C. Das, On comparing Zagreb indices of graphs, \textit{MATCH Commun. Math. Comput. Chem.}, 63(2010), 433-440.

\bibitem{DG1} K. C. Das, I. Gutman, Some properties of the second Zagreb index, \textit{MATCH Commun. Math. Comput. Chem.}, 52(2004), 103-112.

\bibitem{DGH} K. C. Das, I. Gutman, B. Horoldagva,  Comparison between Zagreb indices and Zagreb coindices, \textit{MATCH Commun. Math. Comput. Chem.}, 68(2012), 189–198.

\bibitem{KG1} S. Klav\v{z}ar, I. Gutman, Wiener number of vertex-weighted graphs and
a chemical application, \textit{Disc. Appl. Math.}, 80(1997), 73-81.

\bibitem{DS1} D. Dimitrov, R. \v{S}krekovski, Comparing the irregularity and the total irregularity of graphs, \textit{Ars Math. Contemp.}, 9(2015), 45-50, 

\bibitem{EIG} M. Eliasi, A. Iranmanesh, I. Gutman, Multiplicative versions of first Zagreb index, \textit{MATCH Commun. Math. Comput. Chem.}, 68(2012), 217-230.

\bibitem{GT1} I. Gutman, N. Trinajsti\'{c}, Graph theory and molecular orbitals: Total $\varphi$-electron energy of alternant hydrocarbons,  \textit{Chem. Phy. Lett.}, 17(4)(1972), 535-538, DOI: 10.1016/0009-2614(72)85099-1.

\bibitem{GFV} I. Gutman, B. Furtula, Z. K Vuki\'{c}evi\'{c}, G. Popivoda, On Zagreb Indices and Coindices, \textit{MATCH Commun. Math. Comput. Chem.}, 74(2015), 5-16.

\bibitem{HPG1} S. K. Hameed, V. Paul, K. A. Germina, On co-regular graphs, \textit{Australas. J. Combin.}, 62(1)(2015), 8-17.

\bibitem{FH1} F. Harary, {\em Graph theory}, Narosa Publication, New Delhi, 2001.

\bibitem{NKMT} S. Nikoli\'{c}, G. Kova\u{c}evi\'{c}, A. Mili\'{c}evi\'{c}, N. Trinajsti\'{c},  The Zagreb indices 30 years after. \textit{Croat. Chem. Acta.}, 76(2003), 113–124.

\bibitem{HPS} H.P. Schultz, Topological Organic Chemistry 1, Graph Theory and Topological Indices of Alkanes, \textit{J. Chem. Inf. Comput. Sci.}, 29(1989), 227-228. DOI: 10.1021/ci00063a012.

\bibitem{RB1} S. Ramakrishnan, J. B. Babujee, Schultz and Gutman indices for graph complements, \textit{Int. J. Pure. Appl. Math.}, 105(3)(2015), 383-392, DOI: 10.12732/ijpam.v105i3.6.

\bibitem{NT1} N. Trinajsti\'{c}, \textit{Chemical Graph Theory}, CRC Press, Inc.,
Boca Raton, Florida, 1983.

\bibitem{DBW} D. B. West, {\it Introduction to graph theory}, Pearson Education Inc., 2001.

\bibitem{TZ1} T. Zaslavsky, Signed graphs, \textit{Discrete Appl. Math.}, 4(1)(1982), 47-74., DOI: 10.1016/0166-218X(82)90033-6.

\bibitem{TZ2} T. Zaslavsky, Signed graphs and geometry, \textit{J. Combin. Inform. System Sci.}, 37(2-4)(2012), 95-143.

\bibitem {BZ1} B. Zhou, Zagreb indices, \textit{MATCH Commun. Math. Comput. Chem.}, \textbf{52}(2004), 113–118.

\bibitem{ZG1} B. Zhou, I. Gutman, Further properties of Zagreb indices, \textit{MATCH Commun. Math. Comput. Chem.}, 54(2005), 233–239.

\end{thebibliography}
\end{document}